\documentclass[journal,twoside,web]{ieeecolor}
\usepackage{lcsys}
\usepackage{cite}
\usepackage{amsmath,amssymb,amsfonts}
\usepackage{algorithmic}
\usepackage{graphicx}
\usepackage{textcomp}
\usepackage[utf8]{inputenc}
\usepackage{dsfont}
\usepackage{bbold}
\usepackage{tikz}
\usepackage{booktabs}
\usepackage[ruled]{algorithm2e} 
\usetikzlibrary{arrows}

\newtheorem{prp}{Proposition}
\newtheorem{lemma}{Lemma}
\newtheorem{theorem}{Theorem}

\newtheorem{ex}{Example}
\newtheorem{definition}{Definition}[section]
\newtheorem{remark}{Remark}

\DeclareMathOperator*{\argmax}{arg\,max}
\DeclareMathOperator*{\argmin}{arg\,min}
\newcommand{\mc}{\mathcal}

\newcommand{\be}{\begin{equation}}
	\newcommand{\ee}{\end{equation}}
\newcommand{\ba}{\begin{array}}
	\newcommand{\ea}{\end{array}}

\newcommand{\ov}{\overline}

\newcommand{\ds}{\displaystyle}

\newcommand{\1}{\mathbf 1}

\newcommand{\R}{\mathds R}
\renewcommand{\P}{\mathds P}
\newcommand{\E}{\mathbf E}

\newcommand{\tcb}{\textcolor{blue}}

\def\BibTeX{{\rm B\kern-.05em{\sc i\kern-.025em b}\kern-.08em
    T\kern-.1667em\lower.7ex\hbox{E}\kern-.125emX}}
\begin{document}
\title{\tcb{Bayesian Equilibria of Heterogeneous Non-Atomic 
Routing Games with Private Information}}
\author{Alexia Ambrogio, \IEEEmembership{Student Member, IEEE}, Leonardo Cianfanelli, \IEEEmembership{Member, IEEE}, Giacomo Como, \IEEEmembership{Member, IEEE}, and Paolo Frasca, \IEEEmembership{Senior Member, IEEE}
\thanks{This publication is part of project PNRR-NGEU, which has received funding from the MUR - DM 118/2023.}
\thanks{A.~Ambrogio, L.~Cianfanelli and G.~Como are with Politecnico di Torino, Department of Mathematical Sciences, 10129 Turin, Italy (e-mail: \{alexia.ambrogio, leonardo.cianfanelli, giacomo.como\}@polito.it).}
\thanks{P.~Frasca is with Univ.\ Grenoble Alpes, CNRS, Inria, Grenoble INP, GIPSA-Lab, 38000 Grenoble, France (e-mail: paolo.frasca@inria.fr).}
}
\maketitle

\begin{abstract}
We study non-atomic Bayesian routing games whereby a transportation network is shared by two types of traffic: a coordinated fleet and a mass of selfish users. The links in the network are characterized by travel time functions that depend both on the aggregate flow on the link and on a random state of the world $W$ that, in general, is not directly observable.  Rather, we assume that both the fleet coordinator and the selfish users know the prior distribution of $W$, observe (different) private messages that are correlated with $W$ and possibly among themselves, and make routing decisions based on such heterogeneous partial information.  Under the assumption that both the state of the world and the private message sets are finite, we prove the existence and uniqueness of a Bayesian equilibrium for the ensuing Bayesian routing game. 
\end{abstract}

\begin{IEEEkeywords}
Non-atomic routing games; private information; Bayesian equilibria; transportation networks. 
\end{IEEEkeywords}

\section{Introduction}
Understanding how traffic flows emerge from the interaction of many travelers is a central problem in transportation science. A widely adopted approach models route choice as a strategic interaction among users of the transportation network. When the number of travelers is large, transportation systems can be modeled by non-atomic routing games in which each link of the network is associated with a delay function, describing the dependence of the travel time on the traffic flow, and populations of agents independently select routes to minimize their individual travel times. The resulting equilibrium concept is the Wardrop equilibrium \cite{wardrop1952road}, defined as a flow where no traveler can reduce her travel time by unilaterally deviating to a different path. Under standard assumptions on the delay functions, the existence and uniqueness of Wardrop equilibria are well known \cite{beckmann1956studies}, and their efficiency has been extensively analyzed  \cite{roughgarden2002howbad, roughgarden2005selfishrouting}.

While classical models assume that all travelers behave independently and selfishly, modern transportation systems increasingly involve actors capable of coordinating the routing decisions of a subset of vehicles. In particular, delivery services and mobility providers operate fleets whose routes are centrally optimized according to fleet objectives, such as minimizing total travel time or improving service efficiency. In such settings, a coordinated fleet interacts with a large population of independent travelers who choose routes selfishly. This coexistence introduces a fundamental form of \emph{heterogeneity} in routing behavior, as different groups of users may pursue different objectives and respond differently to congestion~\cite{chen2017optimal,lazar2018efficient,toso2024coordinated,nilsson2018assignment,cianfanelli2022stability,cianfanelli2025stability}. 
At the same time, the growing diffusion of digital mobility platforms and navigation applications naturally leads to routing problems with {\em private information}, where the state of the world is uncertain, and agents make decisions based on probabilistic beliefs. Such problems can be modeled using \emph{Bayesian routing games}, where travel times depend on an uncertain state of the world and agents select routes according to their beliefs about that state \cite{harsanyi1967games,massicot2022competitive,ambrogio2025optimality}. The role of information in routing decisions has been widely studied recently, particularly in connection with the design of information provision mechanisms \cite{ambrogio2025optimality,das2017reducing,tavafoghi2017informational,cianfanelli2023information,zhu2022information}.

In this paper, we study a heterogeneous non-atomic routing game on a transportation network with two interacting populations: a coordinated fleet and a selfish population. The fleet coordinator selects the routes of its vehicles in order to optimize the total travel time of the fleet, while the selfish population selects routes according to Wardrop equilibrium behavior, minimizing individual travel costs, given their beliefs about the state of the world. 
The network is subject to an uncertain state that affects the delay functions, capturing stochastic events such as incidents, weather conditions, or roadworks. In general, we assume that neither the fleet nor the selfish users directly observe the state of the world: instead, they receive private signals that are correlated with it and among themselves. 
Assuming a commonly known prior distribution, we prove existence and uniqueness of Bayesian equilibria for these routing games. The importance of uniqueness in real-world applications can be appreciated, for instance, in the context of information design. Indeed, in a situation in which a designer wants to influence the users of a network through information, uniqueness enhances robustness and predictability, because the designer can anticipate a single outcome in response to a given policy or information structure.

The paper is organized as follows. In Section 2, we describe the transportation network model and the notion of Bayesian equilibria. In Section 3, we formulate the Bayesian routing game on the transportation network, we prove the existence of Bayesian equilibria, and provide sufficient conditions for their uniqueness. In Section 4, we summarize our findings.

\section{Model}

\subsection{Transportation network}
The \emph{transportation network} is modeled by a finite directed multigraph $\mathcal{G}=\left(\mathcal{V},\mathcal{E}, \sigma, \xi \right)$, where $\mathcal{V}$ is the set of nodes, $\mc E$ is the set of links, and $\sigma, \xi : \mathcal{E} \rightarrow \mathcal{V}$ are two functions that associate each link with its tail and head node, respectively. We assume that there are no self-loops, i.e., $\sigma(e)\ne\xi(e)$ for every $e$ in $\mc E$. 
 We assume that there is a unique origin and destination $o,d$ in $\mc V$, with $o \neq d$ and $d$ reachable from $o$, and let $\mc P$ denote the set of paths from $o$ to $d$. A path between two nodes $i$ and $j$ in $\mc V$ is a finite sequence of links $(e_1,...,e_n)$ such that $\sigma(e_1)=i$, $\sigma(e_{l+1})=\xi(e_{l})$ for every $l=1,\cdots,n-1$, and $\xi(e_n)=j$. The transportation network topology is then fully characterized by its node-link incidence matrix $B$ in $\R^{\mathcal V \times \mathcal E}$, with entries $B_{ie} = 1$ if $\sigma(e)=i$, $B_{ie} =-1$ if $\xi(e)=i$, or $B_{ie} =0$ otherwise. Moreover, we define the link-path incidence matrix $A$ in $\{0,1\}^{\mathcal E \times \mc P}$, with entries $A_{ep} = 1$ if link $e$ belongs to path $p$, or $A_{ep} = 0$ otherwise.


We consider two populations, called \emph{coordinated fleet} and \emph{selfish population}. For simplicity, we assume that both populations have unitary mass, and define the exogenous inflow $\nu = \delta^{(o)}-\delta^{(d)}$ in $\R^{\mc V}$, where  $\delta^{(i)}$ is the vector with $1$ in the $i$-th entry $\delta_i^{(i)}$ and $0$ in all other positions.
Hence, the set of network flows for both populations is
\begin{equation}
	\mc F =\{f \in \R_+^{\mc E}: Bf = \nu\}.
\end{equation}

The \emph{state of the world} is described by a random variable $W$ defined on a probability space $(\Omega, \mathcal{A}, \P)$ and taking values in a nonempty finite set $\mc W$. We shall assume without loss of generality that $p_w:=\P(W=w) > 0$ for every $w$ in $\mc W$.
Each link of the network is endowed with a delay function $\tau_e : \mc W \times \R_+ \to \R_+$ that depends on the state of the world as well as on the aggregate flow over the link. The delay functions $\tau_e(W,f_e)$ are assumed to be $\mc C^2$ and non-decreasing in the aggregate flow $f_e$.

\subsection{Information}
We assume that both the fleet coordinator and all selfish users know the prior distribution $(p_w)_{w\in\mc W}$ of the state of the world, but in general do not directly observe its realization $W$. Rather, they observe some private messages, to be denoted by $M$ for the fleet coordinator and $\bar M_s$ for the selfish agent $s$ in $\mc S:=[0,1)$, that carry some (partial) information on the state of the world $W$, and make their routing decisions based on the observed messages. 

Formally, let $\mc M=\{1,\cdots,|\mc M|\}$ and $\bar{\mc M}=\{1,\cdots,|\bar{\mc M}|\}$ be two non-empty finite sets where the private messages observed by the fleet coordinator and, respectively, the selfish users, take values, and consider two maps  
$\pi:\mc W\to\Delta_{\mc M}$ and $\bar\pi:\mc W\times\mc M\to\Delta_{\bar{\mc M}}$ taking values in the simplices of probability vectors over $\mc M$ and $\bar{\mc M}$, respectively, where $\Delta_{\mc K} =\{x \in \R_+^{\mc K}: \1'x = 1\}$. Let also $V$ and  $\bar V$ be two random variables defined over the probability space $(\Omega, \mathcal{A}, \P)$, both uniformly distributed over the interval $[0,1)$, mutually independent, and independent from $W$. Then, let 
\be\label{def:M}M=\min\Big\{m\in\mc M:\,V\le\sum_{1\le l\le m}\pi_l(W)\Big\}\,,\ee
be the message observed by the fleet coordinator, and let 
\be\label{def:barM}\!\!\!\bar M_s=\min\!\Big\{\bar m\in\bar{\mc M}:\!\,s+\bar V-\lfloor s+\ov V\rfloor\le\!\!\sum_{1\le l\le \bar m}\!\!\bar\pi_l(W,M)\Big\}\ee
be the message received by every selfish agent $s$ in $\mc S$. 

The following result states that selfish users $s$ in $\mc S$ have identical conditional distribution on their received messages $\bar M_s$ given the pair $(W,M)$, and that the (random) fraction of selfish users that receive a message $\bar m$ in $\bar{\mc M}$ is exactly $\bar\pi_{\bar m}(W,M)$.

\begin{lemma}\label{lemma:pi}
\begin{enumerate} 
\item[(i)] The triple $(W,M,\bar M_s)$ has the same joint probability distribution 
\be\hspace{-1.0cm}\label{joint-dist}\P(W=w,M=m,\bar M_s=\bar m)=p_w\pi_m(w)\bar\pi_{\bar m}(w,m)\,,\ee
for every selfish user $s$ in $\mc S$; 
\item[(ii)]
the fraction of selfish users receiving message $\bar m$  is
$$
\bar\pi_{\bar m}(W,M) = \P(\bar M_0 = \bar m| W,M)\,,
$$
for every $\bar m$ in $\bar{\mc M}$.
\end{enumerate}
\end{lemma}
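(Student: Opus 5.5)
Both parts reduce to one observation: for each fixed $s$ in $\mc S$, the shifted variable $U_s:=s+\bar V-\lfloor s+\bar V\rfloor$, i.e., $s+\bar V$ taken modulo $1$, is again uniformly distributed on $[0,1)$ and independent of $(W,V)$. This holds because rotation of the circle $[0,1)$ by $s$ preserves Lebesgue measure. Concretely, for $u$ in $[0,1)$, $\P(U_s\le u)$ is the measure of $\{v\in[0,1): (s+v)\bmod 1\le u\}$, a union of at most two intervals of total length $u$. Independence is inherited because $U_s$ is a measurable function of $\bar V$ alone, and $\bar V$ is independent of $(W,V)$. I would state this as a preliminary step, noting also that $M$ is a function of $(W,V)$ only.

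For (i), I would first compute the law of $(W,M)$. Conditioning on $W=w$, $V$ is still uniform by independence. By \eqref{def:M}, $M=m$ exactly when $V$ lies in the interval $\big(\sum_{l<m}\pi_l(w),\sum_{l\le m}\pi_l(w)\big]$, which has length $\pi_m(w)$. Hence $\P(W=w,M=m)=p_w\pi_m(w)$. Next, conditional on $(W,M)=(w,m)$, the variable $U_s$ is uniform, since it is independent of $(W,V)$ and therefore of $(W,M)$. By \eqref{def:barM}, $\bar M_s=\bar m$ iff $U_s\in\big(\sum_{l<\bar m}\bar\pi_l(w,m),\sum_{l\le\bar m}\bar\pi_l(w,m)\big]$, which has conditional probability $\bar\pi_{\bar m}(w,m)$. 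Multiplying gives \eqref{joint-dist}. The right-hand side does not depend on $s$, which proves (i).

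For (ii), I would fix a realization $(w,m,\bar v)$ and compute the Lebesgue measure of $\{s\in[0,1):\bar M_s=\bar m\}$. This set equals $\{s:(s+\bar v)\bmod 1\in I_{\bar m}\}$, where $I_{\bar m}$ is the interval of length $\bar\pi_{\bar m}(w,m)$ from the previous step. The map $s\mapsto(s+\bar v)\bmod 1$ is a measure-preserving bijection of $[0,1)$, so the preimage of $I_{\bar m}$ has measure $\bar\pi_{\bar m}(w,m)$. This is the \emph{exact} fraction for every realization of $\bar V$, not just in expectation. Finally, applying part (i) with $s=0$ gives $\P(\bar M_0=\bar m\mid W,M)=\bar\pi_{\bar m}(W,M)$. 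One caveat: if some pair $(w,m)$ has $\pi_m(w)=0$, the conditional probability is defined only almost surely, but the identity holds on the event of positive probability.

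The only delicate point is justifying that the modulo-$1$ rotation preserves uniformity and, in (ii), Lebesgue measure, including the wrap-around when the preimage splits into two intervals. This is elementary and can be checked by handling the cases $\bar v\le\cdot$ and $\bar v>\cdot$ separately. Otherwise, care is needed only with the half-open interval conventions in \eqref{def:M} and \eqref{def:barM}. These affect sets of measure zero only, for example the event $V=0$.
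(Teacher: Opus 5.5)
Your proposal is correct and takes essentially the same route as the paper. Part (i) rests on the uniformity of $s+\bar V-\lfloor s+\bar V\rfloor$ and its independence from $(W,M)$. Part (ii) computes the Lebesgue measure of the set of users receiving $\bar m$; your measure-preserving-rotation argument is exactly what the paper checks by writing that set as $[0,1)\cap\big((H_{\bar m-1},H_{\bar m}]\cup(H_{\bar m-1}+1,H_{\bar m}+1]\big)$. You are slightly more explicit than the paper in invoking (i) at $s=0$ to identify this fraction with $\P(\bar M_0=\bar m\mid W,M)$, a step the paper leaves implicit.
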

\begin{proof}(i) On the one hand, \eqref{def:M} implies that the conditional distribution of the message $M$ given the state of the world $W$ is 
$\P(M=m|W=w)=\pi_m(w)$. On the other hand, \eqref{def:barM} and the fact that, for every $s$ in $\mc S$, the random variable $s+\bar V-\lfloor s+\bar V\rfloor$ is uniformly distributed on the interval $[0,1)$ and independent from $W$ and $M$, imply that the conditional probability distribution of $\bar M_s$ given $W$ and $M$ is  $\P(\bar M_s=\ov m|W=w,M=m)=\bar\pi_{\bar m}(w,m)$. This readily implies \eqref{joint-dist}.

(ii) Let $H_{\bar m}=\sum_{l=1}^{\bar m}\bar\pi_l(M,W)-\bar V$ for all $\bar m$ in $\bar{\mc M}$. 
Then, from \eqref{def:barM}, the set of selfish users that receive message $\bar m$ is
$$
\ba{rcl}\!\!\!\left\{s:\!\bar M_s\!=\!\bar m\right\}&\!\!\!\!\!=\!\!\!\!\!&
\left\{\!s:H_{\bar m-1}\!+\!\bar V\!\!<s\!+\!\bar V\!-\!\lfloor s\!+\!\ov V\rfloor\le\!\!H_{\bar m}\!+\!\bar V\right\}\\
&\!\!\!\!\!=\!\!\!\!\!&[0,1)\cap\big((H_{\bar m-1}, H_{\bar m}]\cup (H_{\bar m-1}\!+\!1, H_{\bar m}\!+\!1]\big),\ea
$$
whose Lebesgue measure is $H_{\bar m}-H_{\bar m-1}=\bar\pi_{\bar m}(W,M)$.
\end{proof}

Thanks to Lemma \ref{lemma:pi}, we can characterize the posterior beliefs on the state of the world after receiving the private messages for both the fleet coordinator and the selfish users. In particular, Lemma \ref{lemma:pi}(i) implies that, every two selfish users $s$ and $\tilde s$ in $\mc S$ that observe the same message $\bar M_s=\bar M_{\tilde s}$ have the same posterior distribution on the pair $(W,M)$ of the state of the world and the fleet coordinator's message. 


\subsection{Bayesian equilibria}
In this section we describe the user behavior and characterize the notion of equilibrium. Towards this goal, we define
$$\mc X=\{x:\mc M\to\mc F\}, \quad X_e=x_e(M) \quad \forall e \in \mc E\,. \\[-3pt]$$
An element $x$ in $\mc X$ is a map from the message that the fleet coordinator receives to the flow that he allocates on the network, while $X_e$ is the random variable describing such flow as a function of $M$. 
For the selfish population, we define
$$\mc Y=\{y:\bar{\mc M}\to\Delta_{\mc P}\},$$
$$
Z_e^y = \sum_{p\in \mc P}\! A_{ep} \!\!\sum_{\bar m\in\bar{\mc M}}\! y_p(\bar m)\bar\pi_{\bar m}(W,M) \\
 = \E[(Ay(\bar M_0))_e|W,M].
$$
An element $y\in\mc Y$ associates with each message $\bar m\in\bar{\mc M}$ a distribution over paths, where $y_p(\bar m)$ represents the fraction of selfish agents choosing path $p$ upon receiving message $\bar m$.
Given $(W,M)$, the quantity $Z_e^y$ denotes the induced flow of the selfish population on link $e$. Specifically, it is obtained by aggregating, over all messages and paths, the fraction of agents receiving message $\bar m$ multiplied by the fraction of those agents selecting path $p$, and then projecting the resulting path flow onto the link space through the incidence matrix $A$. The set of admissible $Z$ is
$$
\mc Z = \{Z: \mc W \times \mc M \to \R_+^{\mc E}: Z = Z^y \ \text{for $y \in \mc Y$}\}\,.
$$

The goal of the fleet coordinator is to minimize the expected total travel time spent on the network by the fleet, given the flow of the selfish population, defined by
\be\label{eq:cost_fleet}
\Phi(x,Z) = \E\big[\sum_{e } X_e
\tau_e(W,X_e + Z_e)\big]\,.\\[-4pt]
\ee
On the other hand, each user of the selfish population aims to travel along a path from $o$ to $d$ that minimizes the expected travel time conditioned on the message that he received. This yields the following notion of equilibrium. 
\begin{definition}\label{def:mixed_eq}
A pair $(x,Z)$ is said to be a \emph{Bayesian equilibrium} if: 1)
\begin{equation}\label{pb:prob_C}
x \in \argmin_{\tilde{x}\in \mc X} \Phi(\tilde x, Z)\,;
\end{equation}
2) there exists $y$ in $\mc Y$ such that $Z = Z^y$, and for every message $\bar m$ in $\bar{\mathcal M}$ and path $p$ in $\mc P$, $y_p(\bar m)>0$ implies
\be\label{eq:wardrop}
\E\Big[\sum_{e \in \mc E} (A_{ep}-A_{eq}) \tau_e\big(W, X_e+Z_e\big)\big|\bar{\mc M} = \bar m\Big] \le 0 \\[-4pt]
\ee
for every path $q$ in $\mc P$.
\end{definition} 

In plain words, a pair $(x,Z)$ is an equilibrium if $x$ minimizes the expected total travel time of the fleet given $Z$, and if every path $p$ that is used by some selfish agents that receive message $\bar m$ has minimal expected travel time according to the posterior belief after receiving message $\bar m$ and given $x$. 
In the next section we shall investigate existence and uniqueness of such Bayesian equilibria.

\begin{remark}
Bayesian equilibria can be viewed as Bayes correlated equilibria where the private messages serve as a correlation device \cite{bergemann2016bayes}. However, in this paper we restrict our analysis to a special class of Bayes correlated equilibria, since by construction the fraction of selfish agents that receive each message is a deterministic function of the state of the world. Another loss of generality comes from the fact that the set of actions of the fleet coordinator is continuous, while the set of messages $\mc M$ that he can receive is finite.
\end{remark}



\begin{ex}\label{ex}
	Consider a network with two nodes and two parallel links, with delay functions $\tau_1(W, f_1) = 4f_1 + W$, $\tau_2(W, f_2) = 8f_2$. 
	The state of the world $W$ takes value in $\mc W=\{0,1\}$, with $p_0 = 1/4$, $p_1 = 3/4$.
	Suppose that the coordinated fleet and the selfish agents can receive one and two messages respectively, i.e., $\mc M=\{m\}$, $\bar{ \mc M}=\{1, 2\}$, with
	\vspace{-0.15cm}
	\begin{equation*}
			\bar \pi_{1}(0) = \bar \pi_2(0)=1/2, \quad
			\bar \pi_{1}(1)= 2/3,\quad
			\bar \pi_{2}(1)=1/3. \\[-3pt]
	\end{equation*}
	Hence, by Bayes' theorem, the posterior distribution about the state of the world for the selfish population are 
	\begin{equation*}
		\begin{matrix}
			\P(W=0 | \bar{ M}=1)= 1/5, & \P(W=0 | \bar{M}=2) = 1/3\,, \\[4pt]
			\P(W=1 | \bar{ M}=1)= 4/5, & \P(W=1 | \bar{M}=2) = 2/3\,. \\[-3pt]
		\end{matrix}
	\end{equation*}
	
	We now compute the set of Bayesian equilibria $(x, Z)$. Since the network has only two parallel links, we have $x_2=1-x_1$ and $Z_2=1-Z_1$.
	Moreover, since the coordinated fleet has only one message, $x$ is a vector with entries $x_1=\alpha$, $x_2=1-\alpha$. Instead, for the selfish population we define $y_1(1)=\beta$, $y_1(2)=\gamma$. Therefore,
	\begin{align*}
		W = 0 \implies Z_1 & =y_1(1) \pi_1(0)+y_1(2) \pi_2(0)= \ds (\beta+\gamma)/2,\\[2pt]
		W = 1 \implies Z_1& =y_1(1) \pi_1(1)+y_1(2) \pi_2(1)= \ds (2\beta+\gamma)/3.	\\[-15pt]
	\end{align*}
	
	We now apply Definition \ref{def:mixed_eq} to find the Bayesian equilibria $(x, Z)$. For the coordinated fleet, $\Phi(x,Z)$ takes the form
	\begin{align*}
		& \frac{1}{4}[\alpha\tau_1(0,\alpha + \frac{\beta+\gamma}{2})+(1-\alpha)\tau_2(0,2-\alpha -\frac{\beta+\gamma}{2})] \ + \\[3pt]
		+ \ & \frac{3}{4}[\alpha\tau_1(1,\alpha + \frac{2\beta+\gamma}{3})+(1-\alpha)\tau_2(1,2-\alpha -\frac{2\beta+\gamma}{3})]\,,
	\end{align*}
	whose unique stationary point is achieved in $\alpha$ such that
	\begin{equation}\label{eq_coordinated}
		24\alpha + 15\beta/2 + 9\gamma/2 - 93/4=0\,. \\[-3pt]
	\end{equation}
We search for internal equilibria for the selfish population by assuming that \eqref{eq:wardrop} is satisfied as an equality. This yields
	\begin{equation*}
	\begin{aligned}\E[\tau_1\big(W, \alpha+Z_1\big)-\tau_2\big(W, 1-\alpha+1-Z_1\big)\big|\bar{\mc M} = 1] & = 0\,, \\
	\E[\tau_2\big(W,1-\alpha+1-Z_1\big)-\tau_1\big(W, \alpha+Z_1\big)\big|\bar{\mc M} = 2] & = 0\,.
	\end{aligned}
	\end{equation*}
Using the posterior distributions and the expression of the delay functions, this yields
$$
			12\alpha +\frac{38}{5}\beta+\frac{22}{5}\gamma-\frac{76}{5}=0\,, \hspace{0.3cm}
			-12\alpha -\frac{22}{3}\beta-\frac{14}{3}\gamma+\frac{46}{3}=0.\\[-2pt]
$$
	Solving this equation together with \eqref{eq_coordinated} we obtain 
	$\alpha=2/3, \beta=5/12, \gamma=11/12$,
	and hence $(x,y)$ and the corresponding Bayesian equilibrium $(x,Z^y)$. Note that this is the unique internal equilibrium, but other equilibria may exist. The presence of other equilibria will be ruled out by Theorem~\ref{gen_result}.
\end{ex}

\section{Main results}
In this section we establish conditions for the existence and uniqueness of Bayesian equilibria. Towards this goal, we first define an auxiliary two-player game, whose Nash equilibria are equivalent to the Bayesian equilibria.

\subsection{Auxiliary 2-player game}
We start with a technical result that is instrumental to show the equivalence between Bayesian equilibria and Nash equilibria of a 2-player game.

\begin{lemma}\label{lemma:equivalence_S}
	$Z$ satisfies condition 2) in Definition \ref{def:mixed_eq} if and only if it is solution of
	\begin{equation}\label{pb:prob_S}
		Z \in \argmin_{\tilde{Z} \in \mc Z} \bar\Phi(\tilde Z,x)\,,
	\end{equation}
	where $\bar \Phi: \mc Z \times \mc X \to \R_+$ is defined by
	$$
	\bar \Phi(Z,x) = \E\Big[\sum_{e \in \mc E} \int_{0}^{X_e + Z_e}\!\!\!\tau_e(W, r) dr\Big].
	$$
	Moreover, $\bar \Phi (Z,x)$ is convex in $Z$.
\end{lemma}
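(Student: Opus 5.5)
The approach is to treat \eqref{pb:prob_S} as a convex program over the convex set $\mc Z$, parametrized by $y\in\mc Y$, and to show that its first-order optimality conditions are exactly the Wardrop-type conditions \eqref{eq:wardrop}.

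\textbf{Convexity.} For fixed $x$ and each $(w,m)$, the map $z\mapsto\int_0^{x_e(m)+z}\tau_e(w,r)\,dr$ is convex, since its derivative $\tau_e(w,x_e(m)+z)$ is non-decreasing in $z$. Moreover, $\bar\Phi(Z,x)$ is a finite nonnegative combination, with weights $p_w\pi_m(w)$, of such terms evaluated at $Z_e(w,m)$, so it is convex in $Z$. The set $\mc Z$ is convex because it is the image of the convex set $\mc Y=\Delta_{\mc P}^{\bar{\mc M}}$ under the linear map $y\mapsto Z^y$. Hence minimizing over $\mc Z$ is equivalent to minimizing $G(y):=\bar\Phi(Z^y,x)$ over $y\in\mc Y$. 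Here $G$ is convex as the composition of a convex function with a linear map, and it is $\mc C^1$.

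\textbf{Gradient computation.} By the chain rule, and using $\partial Z_e^y/\partial y_p(\bar m)=A_{ep}\bar\pi_{\bar m}(W,M)$,
\[
\frac{\partial G}{\partial y_p(\bar m)}=\E\Big[\sum_e A_{ep}\,\tau_e(W,X_e+Z_e)\,\bar\pi_{\bar m}(W,M)\Big].
\]
By Lemma~\ref{lemma:pi}(i), $\bar\pi_{\bar m}(W,M)=\P(\bar M_0=\bar m\mid W,M)$. Since $\tau_e(W,X_e+Z_e)$ is a function of $(W,M)$, the tower property gives
\[
\frac{\partial G}{\partial y_p(\bar m)}=\P(\bar M_0=\bar m)\,\E\Big[\sum_e A_{ep}\tau_e(W,X_e+Z_e)\,\Big|\,\bar M_0=\bar m\Big].
\]
This step is the crux: it identifies the partial derivative with the posterior expected path cost, scaled by the positive constant $\P(\bar M_0=\bar m)$. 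If some message has zero probability, both sides are vacuous, and such messages should be handled separately (for instance by excluding them).

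\textbf{KKT conditions.} Because $\mc Y$ is a product of simplices and $G$ is convex and differentiable, $y$ minimizes $G$ if and only if, for each $\bar m$, $y_p(\bar m)>0$ implies $\partial G/\partial y_p(\bar m)\le\partial G/\partial y_q(\bar m)$ for all $q$. This is the standard variational inequality $\nabla G(y)'(\tilde y-y)\ge0$ for all $\tilde y\in\mc Y$, decomposed blockwise over the simplices. Dividing by $\P(\bar M_0=\bar m)>0$ yields exactly \eqref{eq:wardrop}.

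\textbf{Both directions.} If $Z$ satisfies condition 2), take the $y$ it provides. This $y$ satisfies the KKT conditions, so it minimizes $G$, and therefore $Z=Z^y$ minimizes $\bar\Phi$ over $\mc Z$. Conversely, suppose $Z$ minimizes $\bar\Phi$ over $\mc Z$. By the definition of $\mc Z$, $Z=Z^y$ for some $y$, and any such $y$ minimizes $G$, since $G(y)=\bar\Phi(Z,x)\le\bar\Phi(Z^{\tilde y},x)=G(\tilde y)$ for every $\tilde y$. The KKT conditions then give \eqref{eq:wardrop}.

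The main subtlety is the distinction between $Z$ and $y$: several $y$ can yield the same $Z$. This is harmless, because the gradient of $G$ depends on $y$ only through $Z$, so every $y$ representing an optimal $Z$ satisfies the conditions.
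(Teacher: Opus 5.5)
Your proposal is correct and follows the same route as the paper. It reparametrizes by $y$ through $\Psi(y,x)=\bar\Phi(Z^y,x)$, uses convexity so that the KKT conditions over the product of simplices characterize optimality, and identifies $\partial\Psi/\partial y_{\bar m p}$ with $\P(\bar M_0=\bar m)$ times the posterior expected cost of path $p$. Your explicit handling of both directions (any $y$ representing an optimal $Z$ is itself optimal) and your note on zero-probability messages make precise points that the paper leaves implicit.
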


\begin{proof}
	$\bar \Phi(Z,x)$ is convex in $Z$ because the delay functions are non-decreasing.
	Since $\bar{\mc M}$ is finite, we can write $y$ in matrix form by identifying $y_{mp} \equiv y_p(m)$. With this notation, $y$ is a matrix in $\R_+^{\bar{\mc M} \times \mc P}$ subject to $y\1 = \1$. Now, since by construction every $Z$ in $\mc Z$ admits $y$ in $\mc Y$ such that $Z = Z^y$, and since $Z^y$ is a linear function of $y$, we can define $\Psi(y,x) = \bar \Phi(Z^y,x)$ and note that $\Psi(y,x)$ is convex in $y$, since it is the composition of a convex function with a linear function.
	Moreover,
	\be\label{eq:equivalence}
	Z^y \in \argmin_{\tilde{Z} \in \mc Z} \bar\Phi(\tilde Z,x) \iff y \in \argmin_{\tilde{y} \in \mc Y} \Psi(y,x)\,.
	\ee
	We now prove that $y$ solves the right-most optimization problem in \eqref{eq:equivalence} if and only if $Z^y$ satisfies condition 2) in Definition \ref{def:mixed_eq}, concluding the proof.
	Because of the convexity of $\Psi(y,x)$ in $y$, the KKT conditions of the right-most optimization problem in \eqref{eq:equivalence} are necessary and sufficient for optimality (see Section~3.1 of \cite{beckmann1956studies} for the derivation), that is, $y$ is solution of the right-most problem in \eqref{eq:equivalence} if and only if
	\be\label{eq:optPhi}
	\frac{\partial \Psi}{\partial y_{\bar m p}} \le \frac{\partial \Psi}{\partial y_{\bar m q}}\,, \quad \forall q \in \mc P\,,
	\ee
	for every $\bar m$ in $\bar{\mc M}$, $p$ in $\mc P$ such that $y_{\bar mp} > 0$. Explicit computation yields
	$$
	\frac{\partial \Psi}{\partial y_{\bar m p}} = \E\Big[\sum_{e \in \mc E} A_{ep} \bar\pi_{\bar m}(W,M) \tau_e(W,X_e+Z_e)\Big].
	$$
	By definition of $\bar\pi_{\bar m}(W,M)$, it follows that
	$\P(W = w, M = m, \bar M = \bar m) 
	= \P(W = w, M = m) \bar\pi_{\bar m}(W,M)$.
	Hence, dividing such expression by $\P (\bar M_0 = \bar m)$, we get
	$$
	\frac{\partial \Psi}{\partial y_{\bar m p}} = \E\Big[\sum_{e \in \mc E} A_{ep} \tau_e(W,X_e+Z_e)\Big| \bar M = \bar m\Big]\cdot \P(\bar M_0 = \bar m).
	$$
	The proof is concluded by plugging this expression into \eqref{eq:optPhi} and noticing that this is equivalent to \eqref{eq:wardrop}.
\end{proof}

We now introduce an auxiliary two-players game.

\begin{definition}\label{def:game}
	We consider the game $\Gamma$ with player set $\{C,S\}$, action sets $\{\mc X,\mc Z\}$ and utility functions $\{u(x,Z),\bar u(Z,x)\}$, with
	$$
	u(x,Z) = -\Phi(x,Z), \quad \bar u(Z,x) = - \bar \Phi(Z,x).\\[3pt]
	$$ 
\end{definition}

Here, player $C$ represents the coordinating decision maker choosing the action $x \in \mc X$, while player $S$ is a nominal player representing the collection of all selfish agents in the original game. The artificial aggregate action $Z \in \mc Z$ is defined for technical reasons and it summarizes the collective response of the selfish agents. Hence, the two-player game captures the interaction between the coordinator and the overall behavior of the selfish population.

\begin{definition}\label{def:nash_eq}
	A \emph{pure-strategy Nash equilibrium} of game $\Gamma$ is an action distribution $(x,Z)$ such that
	\be\label{eq:best_response}
	x \in \argmax_{\tilde x \in \mc X} u(\tilde x, Z)\,,\quad
	Z \in \argmax_{\tilde Z \in \mc Z} \bar u(\tilde Z, x)\,.
	\ee
\end{definition}
\vspace{0.1cm}
\begin{prp}\label{prp:equivalence}
	$(x,Z)$ is a pure-strategy Nash equilibrium of $\Gamma$ if and only if it is a Bayesian equilibrium per Definition~\ref{def:mixed_eq}.
\end{prp}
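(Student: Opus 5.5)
The plan is to verify the two conditions of Definition~\ref{def:nash_eq} separately against the two conditions of Definition~\ref{def:mixed_eq}, using the sign change between utilities and costs together with Lemma~\ref{lemma:equivalence_S}.

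\emph{Fleet condition.} Since $u(x,Z)=-\Phi(x,Z)$, for any fixed $Z$ the two sets
\[
\argmax_{\tilde x\in\mc X} u(\tilde x,Z)
\qquad\text{and}\qquad
\argmin_{\tilde x\in\mc X}\Phi(\tilde x,Z)
\]
coincide. Hence the first requirement in \eqref{eq:best_response} is literally the same statement as condition 1), i.e., \eqref{pb:prob_C}.

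\emph{Selfish condition.} Likewise, $\bar u(Z,x)=-\bar\Phi(Z,x)$, so the second requirement in \eqref{eq:best_response} is equivalent to
\[
Z\in\argmin_{\tilde Z\in\mc Z}\bar\Phi(\tilde Z,x),
\]
which is exactly \eqref{pb:prob_S}. By Lemma~\ref{lemma:equivalence_S}, this holds if and only if $Z$ satisfies condition 2) of Definition~\ref{def:mixed_eq} for the given $x$.

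Taking the conjunction of the two equivalences gives the claim, in both directions.

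The only delicate point is that in Definition~\ref{def:mixed_eq} the selfish condition 2) is stated relative to the fleet flow $X_e=x_e(M)$ of the \emph{same} pair $(x,Z)$. So when Lemma~\ref{lemma:equivalence_S} is invoked, it must be applied with that same $x$ held fixed. The lemma is stated for an arbitrary fixed $x\in\mc X$, so this is immediate. Its proof uses only convexity of $\Psi(\cdot,x)$ and the KKT conditions, neither of which depends on which $x$ is fixed.

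It is also worth checking that the action sets match: the fleet optimizes over $\mc X$ in both definitions, and the selfish population ranges over $\mc Z$, which by construction consists exactly of the flows $Z^y$ with $y\in\mc Y$. This is the identification already made in \eqref{eq:equivalence}. No further obstacle is expected; the proposition is essentially a bookkeeping consequence of Lemma~\ref{lemma:equivalence_S}.
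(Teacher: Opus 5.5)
Your proposal is correct and follows the paper's proof: you match the fleet condition to \eqref{pb:prob_C} via $u=-\Phi$, and the selfish condition to condition 2) via $\bar u=-\bar\Phi$ together with Lemma~\ref{lemma:equivalence_S}. Your remark that the lemma must be applied with the same $x$ held fixed is a useful clarification that the paper leaves implicit.
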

\begin{proof}
	By definition of $U$, $x$ satisfies the left-most relation of \eqref{eq:best_response} if and only if it satisfies \eqref{pb:prob_C}. By Lemma \ref{lemma:equivalence_S}, $Z$  satisfies the right-most relation of \eqref{eq:best_response} if and only if it satisfies 2) in Definition \ref{def:mixed_eq}. This concludes the proof.
\end{proof}

\subsection{Existence of Bayesian equilibria}
The next result provides sufficient conditions on the delay functions for the existence of Bayesian equilibria.
Note that our contribution concerns a rich setting combining heterogeneous populations with different objectives, private information and Bayesian routing strategies conditioned on private messages. In this setting, the equilibrium is not a standard Wardrop equilibrium, but rather a coupled Bayesian equilibrium involving both an optimization problem for the coordinator and Wardrop-type conditions for the selfish population under uncertainty. The existence result is therefore not an immediate consequence of classical Wardrop theory.

\begin{prp}\label{exists_symm_info_finite}
	Assume that the delay functions $\tau_e(W,f_e)$ are convex in $f_e$ for every $W$ in $\mc W$. Then, there always exists a Bayesian equilibrium.
\end{prp}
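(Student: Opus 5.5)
By Proposition~\ref{prp:equivalence}, it suffices to show that the auxiliary game $\Gamma$ of Definition~\ref{def:game} has a pure-strategy Nash equilibrium. I plan to apply the Debreu--Glicksberg--Fan existence theorem for games with continuous, quasi-concave payoffs on compact convex strategy sets.

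\textbf{Strategy sets.} $\mc F$ is a nonempty polytope: it is nonempty because $d$ is reachable from $o$, and it is bounded because a unit $o$--$d$ flow may be assumed acyclic. If cycles pose a problem for boundedness, I would restrict to flows of the form $Ap$ with $p\in\Delta_{\mc P}$, or note that flows with positive cycles are never optimal. Hence $\mc X=\mc F^{\mc M}$ is a compact convex subset of a finite-dimensional space. The set $\mc Z$ is the image of the compact convex polytope $\mc Y=\Delta_{\mc P}^{\bar{\mc M}}$ under the linear map $y\mapsto Z^y$, so it is also compact and convex. Both sets are nonempty.

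\textbf{Continuity.} Since every $\tau_e$ is $\mc C^2$, hence continuous, and since the expectations are finite sums over $\mc W\times\mc M$ weighted by $p_w\pi_m(w)$, both $u$ and $\bar u$ are jointly continuous in $(x,Z)$.

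\textbf{Concavity in own action.} Lemma~\ref{lemma:equivalence_S} already gives convexity of $\bar\Phi$ in $Z$, so $\bar u$ is concave in $Z$. For the coordinator, $\Phi(x,Z)$ is a positive combination over $(w,m)$ of terms $x_e(m)\,\tau_e(w,x_e(m)+Z_e(w,m))$. Each term depends only on the single coordinate $t=x_e(m)\ge 0$, with $c=Z_e(w,m)\ge0$ held fixed. It therefore suffices to show that $g(t)=t\,\tau(t+c)$ is convex on $\R_+$. Its second derivative is
\[
g''(t)=2\tau'(t+c)+t\,\tau''(t+c),
\]
which is nonnegative because $\tau$ is non-decreasing ($\tau'\ge0$), convex ($\tau''\ge0$), and $t\ge 0$. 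A sum of convex functions of individual coordinates is jointly convex in $x$, so $u$ is concave in $x$.

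\textbf{Conclusion.} The game $\Gamma$ therefore satisfies the hypotheses of the Debreu--Glicksberg--Fan theorem and has a pure Nash equilibrium $(x,Z)$, which is a Bayesian equilibrium by Proposition~\ref{prp:equivalence}. If one prefers a self-contained argument, the same conclusion follows from Kakutani's theorem: the best-response correspondences are nonempty (by compactness and continuity), convex-valued (by concavity), and have closed graph (by Berge's maximum theorem).

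\textbf{Main obstacle.} The only nonroutine point is convexity of the fleet cost. This is exactly where the hypothesis that $\tau_e$ is convex enters, since the term $t\,\tau''$ must be nonnegative. The boundedness of $\mc F$ is a minor technicality, handled as described above.
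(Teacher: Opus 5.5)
Your proposal is correct and follows essentially the same route as the paper: reduce to a pure Nash equilibrium of $\Gamma$ via Proposition~\ref{prp:equivalence}, then apply Debreu's theorem using compactness and convexity of the action sets, continuity, concavity of $\bar u$ from Lemma~\ref{lemma:equivalence_S}, and the diagonal Hessian of the fleet cost with entries controlled by $2\tau_e'+x_e\tau_e''\ge 0$. Your caveat on boundedness is more careful than the paper, which asserts compactness of $\mc X$ outright even though $\mc F$ is unbounded whenever the network has a directed cycle; your path-restriction fix works, with the small correction that adding a circulation is never \emph{strictly better} rather than never optimal (since $t\mapsto t\,\tau_e(w,t+c)$ is non-decreasing for $\tau_e\ge 0$ non-decreasing), which is exactly what guarantees that an equilibrium of the restricted game is an equilibrium of $\Gamma$.
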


\begin{proof}
	We first prove the existence of Nash equilibria of $\Gamma$.
	To do so, we rely on \cite{debreu1952social}, which ensures that a pure strategy Nash equilibrium of the game $\Gamma$ exists if: (i) the action sets are finite-dimensional, compact, and convex; (ii) the utility function of each player is continuous in both actions and concave in the action of the player itself.
	
	The action sets $\mc X,\mc Z$ are finite-dimensional (since $\mc W$ contains a finite number of elements), compact and convex. Moreover, the utility functions $u(x,Z)$ and $\bar u(Z,x)$ are continuous in both actions, since the delay functions are continuous. We now aim to prove that $u(x,Z)$ is concave in $x$ and that $\bar u(Z,x)$ is concave in $Z$. To prove the concavity of $u(x,Z)$, note that
	$$
	\begin{aligned}
		u(x,Z) &= - \sum_{w \in \mc W}\sum_{m \in \mc M} \P(W = w, M=m) \ \cdot \\
		& \cdot \sum_{e \in \mc E} x_e(m) \tau_e(w,x_e(m) + Z_e)\,,\\[-7pt]
	\end{aligned}  
	$$
	hence
	$$
	\frac{\partial u(x,Z)}{\partial x_{e}(m) \partial x_{\tilde e}(\tilde m)} = 0, \quad \text{if} \ e \neq \tilde e \ \text{or} \ m \neq \tilde m.\\[-2pt]
	$$
	Therefore, the Hessian of $U$ is diagonal. Direct computation yields that the diagonal elements are non-positive if the delay functions $\tau_e(W,f_w)$ are non-decreasing and convex in $f_e$. This proves the concavity of $u(x,Z)$ in $x$. The concavity of $\bar u(Z,x)$ in $Z$ follows from $\bar u(Z,x) = - \bar \Phi(Z,x)$ and from convexity of $\bar \Phi(Z,x)$ in $Z$, established by Lemma \ref{lemma:equivalence_S}.
	
	This proves the existence of a Nash equilibrium of the two-players game $\Gamma$. The existence of a Bayesian equilibrium then follows from Proposition \ref{prp:equivalence}.
\end{proof}

\subsection{Uniqueness}
This section provides sufficient conditions for the uniqueness of Bayesian equilibria.

\begin{theorem}\label{gen_result}
	Assume that the delay functions are strictly increasing and convex, and
	\begin{equation}\label{eq:uniqueness}
		2\tau'_e(W,f_e + \bar f_e) > f_e \tau_e''(W,f_e + \bar f_e) 
	\end{equation}
	for every $e$ in $\mc E$, $W$ in $\mc W$, and $f,\bar f$ in $\mc F$. Then, there exists a unique Bayesian equilibrium.
\end{theorem}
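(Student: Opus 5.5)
Existence is already given by Proposition~\ref{exists_symm_info_finite}, since strictly increasing convex delays satisfy its hypotheses. So the plan is to prove uniqueness of the Nash equilibrium of the auxiliary game $\Gamma$ and then transfer it through Proposition~\ref{prp:equivalence}. I would use Rosen's diagonal strict concavity criterion. Both players' strategies enter costs only through the link variables $x_e(m)$ and $Z_e(w,m)$, and the constraint set $\mc X\times\mc Z$ is a convex compact product. I will work with the gradient map
\[
G(x,Z)=\big(\nabla_x\Phi(x,Z),\ \nabla_Z\bar\Phi(Z,x)\big)
\]
and show it is strictly monotone on $\mc X\times\mc Z$. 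Once that holds, two equilibria $(x^1,Z^1),(x^2,Z^2)$ would each satisfy the variational inequality of its own best-response problem. Adding the two inequalities gives $\langle G(x^1,Z^1)-G(x^2,Z^2),(x^1-x^2,Z^1-Z^2)\rangle\le 0$, which contradicts strict monotonicity unless the two pairs coincide.

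To check monotonicity I would compute the Jacobian of $G$. The costs decouple across links $e$ and across realizations $(w,m)$, where $x$ depends only on $m$. The natural setting is therefore the space of functions of $(w,m)$ weighted by $\P(W=w,M=m)$, with $x$ embedded as an $(w,m)$-function constant in $w$. For each $(e,w,m)$, write $f=x_e(m)$, $z=Z_e(w,m)$, $\tau=\tau_e(w,\cdot)$ evaluated at $f+z$. The partial derivatives are $\partial_f[f\tau(f+z)]=\tau+f\tau'$ and $\partial_z\int_0^{f+z}\tau=\tau$. The local $2\times2$ Jacobian is then
\[
J=\begin{pmatrix}2\tau'+f\tau'' & \tau'+f\tau''\\ \tau' & \tau'\end{pmatrix},
\]
and it suffices to show $J+J^\top$ is positive definite. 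Its symmetric part has diagonal entries $2(2\tau'+f\tau'')$ and $2\tau'$, and off-diagonal entries $2\tau'+f\tau''$. After dividing by 2 the determinant is $(2\tau'+f\tau'')\tau'-\tfrac14(2\tau'+f\tau'')^2=\tfrac14(2\tau'+f\tau'')(2\tau'-f\tau'')$. The first factor is positive because $\tau'>0$ and $\tau''\ge0$. The second factor is exactly \eqref{eq:uniqueness}, and the trace is positive. Hence $J+J^\top\succ0$ at every point, and since $f$, $z$ range over link flows of $\mc F$, the hypothesis applies along the whole segment between any two strategy profiles.

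Summing over $(e,w,m)$ with positive weights $\P(W=w,M=m)$ gives positive definiteness of the symmetrized Jacobian on the embedded space. Integrating along the segment then yields strict monotonicity of $G$. I expect the main obstacle to be this bookkeeping step, for two reasons. First, $x$ lives on $\mc M$ while $Z$ lives on $\mc W\times\mc M$, so I must check that the inner product $\sum_m\nabla_{x(m)}\Phi\cdot\Delta x(m)$ equals the weighted sum over $(w,m)$ of the local terms; this holds because $\nabla_{x_e(m)}\Phi=\sum_w\P(w,m)(\tau+f\tau')$. Second, strict monotonicity gives $x^1=x^2$ and $Z^1=Z^2$ only as functions on the support of $(W,M)$. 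Pairs $(w,m)$ with $\P(W=w,M=m)=0$ do not affect any cost and can be identified, and I would remark that uniqueness is understood up to this identification. Uniqueness of $Z$ is stated for $Z$ rather than for $y$, which may indeed be non-unique. Proposition~\ref{prp:equivalence} then converts the unique Nash equilibrium into the unique Bayesian equilibrium.
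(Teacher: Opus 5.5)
Your proposal is correct and follows essentially the paper's route: you reduce to the auxiliary two-player game with the fleet's action embedded as functions of $(w,m)$, and you apply Rosen's diagonal strict concavity through the same per-$(q,e)$ $2\times 2$ block, whose symmetrized determinant factors as $(2\tau'+f\tau'')(2\tau'-f\tau'')$. Using first-order variational inequalities instead of the paper's KKT multipliers is a harmless and slightly cleaner variation, since it avoids the consistency constraints $\hat x_{qe}=\hat x_{q'e}$ that the paper's Lagrangian leaves out; your caveat that uniqueness holds only up to $(w,m)$ pairs of zero probability is a valid refinement, which the paper's argument also implicitly needs because the blocks with $\P(Q=q)=0$ vanish.
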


\begin{proof}
	We prove that the auxiliary game $\Gamma$ admits a unique Nash equilibrium. Then, the uniqueness of the Bayesian equilibrium follows from Proposition \ref{prp:equivalence}.
	
	Since $Z$ depends on $(W,M)$, and since $W,M$ can take a finite number of values, we can express the elements $Z$ in $\mc Z$ in matrix form. Let $Q=(W,M)$ take values in $\mc Q = \mc W\times\mc M$. Since the set of actions $\mc Z$ of the selfish population is the image of a linear function $Z^y$, for every $y$ in $\mc Y$, standard arguments of linear algebra imply the existence of two matrices $H,K$ such that
	$$
	\mc Z = \{Z \in \R_+^{\mc Q \times E}: HZ = K\}\,.
	$$
	For convenience, we also represent $\mc X$ in matrix form by
	$$
	\mc X = \{x \in \R_+^{\mc M \times \mc E}: \sum_{e} B_{ie} x_{me} = \nu_i, \ \forall i \in \mc V, \ \forall m \in \mc M\}\,.
	$$
	For $q = (w,m)$ in $\mc Q$, let $q_1 = w$, $q_2 = m$.
	It will also prove useful to embed $\mc X$ into the augmented set
	\begin{align*}
		\hat{\mc X} = \{& \hat x \in \R_+^{\mc Q \times \mc E}: \sum_{e}B_{ie} \hat x_{qe} = \nu_i, \ \forall i \in \mc V, \ \forall q \in \mc Q\,, \\
		& \hat x_{q e} = \hat x_{q'e}\,, \forall e \in \mc E, \forall q, q' \in \mc Q \text{ s.t. } q_2 = q'_2\}\,.
	\end{align*}
	In plain words, $\hat{\mc X}$ is a new artificial set of actions for the fleet coordinator, where the action is in principle allowed to depend explicitly on the state of the world, under the constraint that this dependence is fictitious, as the actions are constrained to depend only on the message. Notice that $\mc X$ can be embedded into $\hat{\mc X}$, and as a consequence, there exists a bijection between $\mc X$ and $\hat{\mc X}$. We also parametrize the utility functions of the two players in order to let them depend on $x$ in $\hat{\mc X}$, $Z$ in $\mc Z$. Let $v, \bar v: \hat{\mc X} \times \mc Z \to \R$ be defined by
	$$
	v(x,Z) = - \sum_{q \in \mc Q} \P(Q = q) \sum_{e \in \mc E} x_{qe}
	\tau_e(q_1, x_{qe} + Z_{qe})\,,
	$$
	$$
	\bar v(x,Z) = - \sum_{q \in \mc Q} \P(Q = q) \sum_{e \in \mc E} \int_0^{x_{qe} + Z_{qe}}
	\tau_e(q_1, r) \mathrm{d} r\,.
	$$
	We have now defined a new two-players game $\hat \Gamma$ with action sets $\hat{\mc X},\mc Z$ and utility functions $v,\bar v$, whose Nash equilibria $(x,Z)$
	satisfy
	\be\label{eq:nash2}
	x \in \argmax_{\tilde x \in \hat{\mc X}} v(\tilde x, Z)\,,\quad
	Z \in \argmax_{\tilde Z \in \mc Z} \bar v(\tilde Z, x)\,.
	\ee
	It is fundamental to observe that, by construction, there is a bijection between Nash equilibria of $\hat \Gamma$ and those of $\Gamma$, so that $\Gamma$ admits a unique Nash equilibrium if and only $\hat \Gamma$ admits a unique Nash equilibrium.
	
	We now aim to prove the uniqueness of the Nash equilibrium of $\hat\Gamma$. The Lagrangian function of the left-most optimization problem in \eqref{eq:nash2} is
	$$
	\begin{aligned}
		L(x,Z,\gamma,\lambda) & = v(x,Z) + \sum_{e,q} \gamma_{qe} x_{qe} + \\[-3pt]
		& + \sum_{i,q} \lambda_{iq} \big(\sum_{e} B_{ie} x_{qe} - \nu_i\big),\\[-3pt]
	\end{aligned}
	$$
	where $\gamma_{eq} \ge 0$ and $\lambda_{iq}$ are Lagrangian multipliers associated to inequality constraint $x_{qe}\geq 0$ and equality constraints $\sum_{e}B_{ie} x_{qe} = \nu_i$ that an element $x$ in $\hat{\mc X}$ must satisfy.
	The Lagrangian function of the right-most optimization in \eqref{eq:nash2} is
	$$
	\begin{aligned}
		\bar L(Z,x,\omega,\delta) & = \bar v(Z,x) + \sum_{q,e} \omega_{q e} Z_{q e} + \\[-3pt]
		& + \sum_{q',e} \delta_{q'e} \big(\sum_{q} H_{q'q} Z_{q e} - K_{q'e}\big),\\[-3pt]
	\end{aligned}
	$$
	where $\omega_{qe} \ge 0$ and $\delta_{q' e}$ are Lagrangian multipliers associated to inequality constraint $Z_{qe} \geq 0$ and equality constraints $\sum_{q} H_{q'q} Z_{q e} = K_{q'e}$ of $\mc Z$, respectively.
	
	Suppose now that there are two different Nash equilibria $(x,Z) \neq (\tilde x, \tilde Z)$. For $(x,Z)$ to be a Nash equilibrium, there must exist a set of Lagrangian multipliers $(\gamma,\lambda,\omega,\delta)$ such that
	$$
	\frac{\partial L(x,Z,\gamma,\lambda)}{\partial x_{qe}} = 0\,, \quad \frac{\partial \bar L(Z,x,\omega,\delta)}{\partial z_{qe}} = 0\,,\\[-3pt]
	$$
	for every $e$ in $\mc E$, $q$ in $\mc Q$, which readily implies
	\begin{equation}\label{eq:1}
		\begin{split}
			&\frac{\partial v(x,Z)}{\partial x_{qe}} + \gamma_{qe} +\sum_{i} \lambda_{iq} B_{ie} = 0\,, \\[-3pt]
			&\frac{\partial \bar v(Z,x)}{\partial Z_{qe}} + \omega_{qe} +\sum_{q'} \delta_{q'e} H_{q'q} = 0\,.\\[-8pt]
		\end{split}
	\end{equation}
	Equivalently, for $(\tilde x,\tilde Z)$ to be a Nash equilibrium, there must exist a set of Lagrangian multipliers $(\tilde \gamma,\tilde \lambda,\tilde \omega,\tilde \delta)$ such that
	\begin{equation}\label{eq:2}
		\begin{split}
			&\frac{\partial v(\tilde x,\tilde Z)}{\partial \tilde x_{qe}} + \tilde \gamma_{qe} +\sum_{i} \tilde \lambda_{iq} B_{ie} = 0\,, \\
			&\frac{\partial \bar v(\tilde Z,\tilde x)}{\partial \tilde Z_{qe}} + \tilde \omega_{qe} +\sum_{q'} \tilde \delta_{q'e} H_{q'q} = 0\,.\\[-8pt]
		\end{split}
	\end{equation}
	Moreover, the complementary slackness constraints, i.e.,
	\be\label{eq:slackness}
	\!\omega_{qe} Z_{qe} = 0\,, \  \gamma_{qe} x_{qe} = 0\,, \ \tilde\omega_{qe} \tilde Z_{qe} = 0\,, \  \tilde \gamma_{qe} \tilde x_{qe} = 0\,,
	\ee
	must hold true for every $e$ in $\mc E$, $q$ in $\mc Q$.
	We now multiply the first equation of \eqref{eq:1} by $(\tilde x_{qe}-x_{qe})$, the second equation of \eqref{eq:1} by $(\tilde Z_{qe}- Z_{qe})$, the first equation of \eqref{eq:2} by $(x_{qe}- \tilde x_{qe})$ and the second equation of \eqref{eq:2} by $(Z_{qe}- \tilde Z_{qe})$.
	Then, summing all the resulting terms for every $e,q$ and rearranging them we get that 
$\psi+\chi=0,$
	where
	\begin{equation*}
		\begin{aligned}
			\psi & = \sum_{q, e} (\tilde x_{qe}- x_{qe})\Big(\frac{\partial v(x,Z)}{\partial x_{qe}} - \frac{\partial v(\tilde x,\tilde Z)}{\partial \tilde x_{qe}}\Big) \ + \\[-3pt]
			& + \sum_{q,e} (\tilde Z_{qe}- Z_{qe})\Big(\frac{\partial \bar v(Z,x)}{\partial Z_{qe}} - \frac{\partial \bar v(\tilde Z,\tilde x)}{\partial \tilde Z_{qe}}\Big)\,,
		\end{aligned}
	\end{equation*}
	\begin{equation*}
		\begin{aligned}
			\chi & = \!\sum_{q, e} (\tilde x_{qe} \!-\! x_{qe})\big(\gamma_{qe}\!-\!\tilde \gamma_{qe} \! + \! \sum_{i} \!\! \big(\lambda_{iq} B_{ie} \!-\! \tilde \lambda_{iq} B_{ie}\big)\big) \ + \\
			& + \!\sum_{q,e} (\tilde Z_{qe}\!-\! Z_{qe}) \big(\omega_{qe} \!-\! \tilde \omega_{qe} \! +\! \sum_{q'}\!\! \big(\delta_{q'e} H_{q'q} \!-\! \tilde \delta_{q'e} H_{q'q}\big)\big)\,.
		\end{aligned}
	\end{equation*}
	We now analyze $\chi$ and $\psi$ separately, starting from $\chi$.
%
	First, note that, for $x$ and $\tilde x$ to belong to $\hat{\mc X}$, and for $Z$ and $\tilde Z$ to belong to $\mc Z$,
	$$
	\begin{aligned}
	\sum_e B_{ie} x_{qe} = \sum_e B_{ie} \tilde x_{qe} = \nu_i, \quad \forall i,q\,.\\
	\sum_{q} H_{q'q} Z_{qe} = \sum_{q} H_{q'q} \tilde Z_{qe} = K_{q'e}\,, \quad \forall q',e\,.
	\end{aligned}
	$$
	Using these two equations and \eqref{eq:slackness}, we get
	$$
	\chi = \sum_{q,e} (\tilde{x}_{qe} \gamma_{qe} + x_{qe} \tilde{\gamma}_{qe}) + \sum_{q,e} (\tilde{Z}_{qe} \omega_{qe} + Z_{qe} \tilde \omega_{qe}) \ge 0,
	$$
	where the last equivalence follows since both the Lagrangian multipliers $\gamma,\omega$ and the actions $x,Z$ are non-negative.
	
	We now prove that $\psi \ge 0$ and $\psi = 0$ if and only if the two equilibria coincide. Note that $(x,Z) = (\tilde x,\tilde Z)$ implies $\psi = 0$. We are then left to prove that $(x,Z) \neq (\tilde x,\tilde Z)$ implies $\psi>0$.
	To this end, we define $\sigma(x,Z): = v(x,Z)+\bar v(Z,x)$ and its pseudogradient 
	\begin{equation*}
		g(x,Z) :=
		\bigg[ \begin{matrix}
			\nabla_x v(x,Z) \\
			\nabla_Z \bar v(Z,x)
		\end{matrix} \bigg]\,,
	\end{equation*}
	obtained by stacking the gradient of the utility function of both players with respect to their actions. As shown in \cite[Section 3]{rosen1965existence}, if $(x,Z) \neq (\tilde x,\tilde Z)$ and $\sigma(x,Z)$ is diagonally strictly concave for every $(x,Z)$, then $\psi > 0$. The same reference shows that a sufficient condition for $\sigma(x,Z)$ to be diagonally strictly concave is that the Jacobian $J(x,Z)$ of the pseudogradient $g(x,Z)$ is negative definite for every $x,Z$. Direct computation yields that $J(x,Z)$ has a block diagonal structure, since $\partial^2 V / (\partial x_{qe}\partial x_{q'e'}) = \partial^2 V / (\partial x_{qe}\partial Z_{q'e'}) = \partial^2 \bar V / (\partial Z_{qe}\partial Z_{q'e'}) = \partial^2 \bar V / (\partial Z_{qe}\partial Z_{q'e'}) = 0$ if $(q,e) \neq (q',e')$. Moreover,
	\begin{align*}
		&\frac{\partial^2 v(x,Z)}{\partial x_{qe}^2} = - \P(Q = q) (2\tau_e'+x_{qe}\tau_e'')\\
		&\frac{\partial^2 v(x,Z)}{\partial x_{qe} \partial Z_{qe}}= -\P(Q = q)(\tau_e'+x_{qe} \tau_e'')  \\
		&\frac{\partial^2 \bar v(Z,x)}{\partial Z_{qe}\partial x_{qe}} = \frac{\partial^2 \bar v(Z,x)}{\partial Z_{qe}^2} = -\P(Q = q)\tau_e'\,.
	\end{align*}
	We are now left to prove that, for every $e$ in $\mc E$ and $q$ in $Q$, the $(e,q)$-th block of $J$ is negative definite, so that the entire Jacobian is negative definite. To do so, we sum the block with its transpose, obtaining
	$$
	-\P(Q = q) \left(\ba{cc} 2(2\tau_e' + x_{qe} \tau_e'') & 2\tau_e' + x_{qe} \tau_e'' \\
	2 \tau_e' + x_{qe} \tau_e'' & 2\tau_e' \ea\right)\,.
	$$
	Since the delay functions are strictly increasing and convex, the trace of the matrix is negative, while the determinant is positive if and only if
	\begin{equation*}
		2\tau'_e(W,f_e + \bar f_e) > f_e \tau_e''(W,f_e + \bar f_e) \,, \quad \forall q \in \mc Q, \ \forall e \in \mc E\,.
	\end{equation*}
	This proves the uniqueness of the Nash equilibrium of $\hat \Gamma$, and therefore of $\Gamma$. Proposition \ref{prp:equivalence} then implies the uniqueness of the Bayesian equilibrium.
\end{proof}

\begin{remark}
A sufficient condition similar to \eqref{eq:uniqueness} was established by \cite{nilsson2018assignment} for uniqueness of the equilibria in presence of two populations in a deterministic setting. It is also worth noting that the uniqueness of the Nash equilibrium of $\Gamma$, which is an intermediate step to proving Theorem \ref{gen_result}, is established using an argument similar to that in \cite[Theorem 2]{rosen1965existence}. In that work, sufficient conditions for uniqueness are derived for the case in which the action sets are subject to inequality constraints, but without any equality constraints.
\end{remark}

\section{Conclusion}
We have established existence and uniqueness of Bayesian equilibria for non-atomic Bayesian routing games, whereby a coordinated fleet and a population of selfish users share a transportation network. We have assumed that the travel times on the links depend on both the aggregate flow on them and a random state of the world that is generally not directly observable by neither the fleet coordinator nor the selfish users, who make routing decisions based on private partial information obtained from messages that are correlated with the state of the world and among themselves. Future directions include extending the analysis to continuous sets of states of the world $\mc W$ and sets of messages $\mc M$, as well as studying information design problems, where a system planner designs the private messaging policy aiming at optimizing the social welfare.


\bibliography{biblio}

@inproceedings{nilsson2018assignment,
	author = {Nilsson, G. and Grover, P. and Kalabi{\'c}, U.},
	booktitle = {57th IEEE CDC},
	pages = {1023--1028},
	title = {Assignment and control of two-tiered vehicle traffic},
	year = {2018}}

@article{rosen1965existence,
	author = {Rosen, J.B.},
	journal = {Econometrica},
	pages = {520--534},
	title = {Existence and uniqueness of equilibrium points for concave n-person games},
	year = {1965}}

@article{debreu1952social,
	author = {Debreu, G.},
	journal = {Proceedings of the national academy of sciences},
	number = {10},
	pages = {886--893},
	publisher = {National Academy of Sciences},
	title = {A social equilibrium existence theorem},
	volume = {38},
	year = {1952}}

@article{ambrogio2025optimality,
  title={On Optimality of Private Information in {Bayesian} Routing Games},
  author={Ambrogio, A. and Cianfanelli, L. and Como, G.},
  journal={arXiv preprint arXiv:2509.03357},
  year={2025}
}

@article{roughgarden2002howbad,
	author = {Roughgarden, T. and Tardos, {\'E}.},
	journal = {Journal of the ACM},
	number = {2},
	pages = {236--259},
	title = {How Bad is Selfish Routing?},
	volume = {49},
	year = {2002}}

@book{roughgarden2005selfishrouting,
	author = {Roughgarden, T.},
	publisher = {MIT},
	title = {Selfish Routing and the Price of Anarchy},
	year = {2005}}

@article{chen2017optimal,
	author = {Chen, X. and He, X. and Yin, Y.},
	journal = {Transportation Research Part B: Methodological},
	pages = {620--634},
	title = {Optimal Deployment of Autonomous Vehicles in Traffic Networks},
	volume = {105},
	year = {2017}}

@inproceedings{lazar2018efficient,
	author = {D.A. Lazar and A. Khani and G. Zussman},
	booktitle = {17th AAMAS},
	pages = {2191--2193},
	title = {Efficient Traffic Routing in the Presence of Mixed Autonomy},
	year = {2018}}

@article{harsanyi1967games,
	author = {Harsanyi, J.C.},
	journal = {Management Science},
	number = {3,5,7},
	pages = {159--182, 320--334, 486--502},
	title = {Games with Incomplete Information Played by ``{B}ayesian'' Players, Part I--III},
	volume = {14},
	year = {1967}}

@inproceedings{cianfanelli2023information,
	author = {Cianfanelli, L. and Ambrogio, A. and Como, G.},
	booktitle = {2023 62nd IEEE Conference on Decision and Control (CDC)},
	organization = {IEEE},
	pages = {3945--3949},
	title = {Information design in {B}ayesian routing games},
	year = {2023}}

@article{wardrop1952road,
	author = {Wardrop, J.G.},
	journal = {Proceedings of the Institution of Civil Engineers},
	number = {3},
	pages = {325--362},
	publisher = {Thomas Telford-ICE Virtual Library},
	title = {Some theoretical aspects of road traffic research},
	volume = {1},
	year = {1952}}

@inproceedings{das2017reducing,
	author = {Das, S. and Kamenica, E. and Mirka, R.},
	booktitle = {55th Allerton Conf.},
	pages = {1279--1284},
	title = {Reducing congestion through information design},
	year = {2017}}

@article{massicot2022competitive,
	author = {Massicot, O. and Langbort, C.},
	journal = {IEEE Transactions on Control of Network Systems},
	number = {4},
	pages = {1589-1599},
	title = {Competitive Comparisons of Strategic Information Provision Policies in Network Routing Games},
	volume = {9},
	year = {2022}}

@inproceedings{tavafoghi2017informational,
	author = {H. Tavafoghi and D. Teneketzis},
	booktitle = {55th Allerton Conf.},
	pages = {1285--1292},
	title = {Informational incentives for congestion games},
	year = {2017}}

@inproceedings{toso2024coordinated,
	author = {T. Toso and F. Parise and P. Frasca and A.Y. Kibangou},
	booktitle = {63rd IEEE CDC},
	doi = {10.1109/CDC56724.2024.10886835},
	pages = {4173-4178},
	title = {On the impact of coordinated fleets size on traffic efficiency},
	year = {2024}}

@techreport{beckmann1956studies,
  title={Studies in the Economics of Transportation},
  author={Beckmann, M. and McGuire, C.B. and Winsten, C.B.},
  year={1956}
}

@article{zhu2022information,
	author = {Zhu, Y. and Savla, K.},
	journal = {IEEE Transactions on Control of Network Systems},
	number = {2},
	pages = {613--624},
	publisher = {IEEE},
	title = {Information design in nonatomic routing games with partial participation: Computation and properties},
	volume = {9},
	year = {2022}}

@article{bergemann2016bayes,
  title={Bayes correlated equilibrium and the comparison of information structures in games},
  author={Bergemann, D. and Morris, S.},
  journal={Theoretical Economics},
  volume={11},
  number={2},
  pages={487--522},
  year={2016},
  publisher={Wiley Online Library}
}

@inproceedings{cianfanelli2022stability,
  title={Stability and bifurcations in transportation networks with heterogeneous users},
  author={Cianfanelli, L. and Como, G. and Toso, T.},
  booktitle={2022 IEEE 61st Conference on Decision and Control (CDC)},
  pages={6371--6376},
  year={2022},
  organization={IEEE}
}

@article{cianfanelli2025stability,
  title={On the stability of the logit dynamics in population games},
  author={Cianfanelli, L. and Como, G.},
  journal={IEEE Transactions on Automatic Control},
  volume={70},
  number={9},
  pages={5910--5925},
  year={2025},
  publisher={IEEE}
}
\bibliographystyle{plain}

\end{document}